\newtheorem{thm}{Theorem}[section]
\newtheorem{lem}[thm]{Lemma}
\newtheorem{theor}[thm]{Theorem}
\newtheorem{cor}[thm]{Corolary}
\newcommand{\Me}{\mathcal{M}}  %set of measures on given space
\newcommand{\St}{\mathcal{S}}
\newcommand{\Obs}{\mathrm{Obs}}
\newcommand{\distr}{\mathrm{distr}}
\newcommand{\MALG}{\mathrm{MALG}}
\newcommand{\N}{\mathbb{N}}
\newcommand{\sA}{\mathscr{A}} %subalgebra A
\newcommand{\So}{\mathfrak{S}}
\newcommand{\Sym}{\mathrm{Sym}}
\title{Weak containment and maximal sofic approximations}
\author{Andrei Alpeev\footnote{Chebyshev Laboratory, St. Petersburg State University, 14th Line V.O., 29B, Saint Petersburg 199178 Russia,  \href{mailto:a.alpeev@spbu.ru}{a.alpeev@spbu.ru}}   }
\begin{document}

\maketitle
\begin{abstract}
We show that the class of sofic actions is closed under direct products and contains a (non-unique) maximal element in the weak containment order. For any sofic group we construct nice sofic approximations such that all the sofic actions are approximable by them in a doubly-quenched way. We use recent result by Hayes to establish for these sofic approximations the equality of sofic measure entropy to the topological one for algebraic actions whenever the former is not $-\infty$. We also use his another result to establish the product formula for Pinsker factors of these approximations.
\end{abstract}
keywords: sofic action, sofic entropy, weak containment, algebraic action, Pinsker factor
\newpage
\tableofcontents

\section{Introduction}

The notion of weak containment was introduced by Kechris in the book \cite{K10} in the context of cost theory of Borel eqiuvalence relations. It turned out to be a very fruitfull concept. It is known that this pre-order has a maximal element(see \cite{GTW06} and \cite{K10}). It was proved by Abert and Weiss \cite{AbW13} that Bernoulli actions are minimal in the class of free actions. In the work \cite{S16} Seward introduced the notion of weak containment for joinings and used it in the context of so-called  Rokhlin entropy.
Another notion we will work with is the notion of sofic action. In the groundbreaking paper \cite{B10} Lewis Bowen have defined so-called sofic entropy, the notion realiant on mimicking of a given acion over so-called  sofic approximation(which is a finite approximation of a group).
An action which could be approximated in such a way is called sofic action. 
It was already known that weak containent has something to do with soficity. It is a folklore fact that an action weakly contained in the sofic action is sofic itself. Carderi in \cite{C15} connected this notion to the soficity of action relative to given sofic approximation, namely he showed that action is approximable by given ultralimit sofic approximation iff it is weakly contained in the ultraproduct action of this sofic approximation. In the work \cite{BTD17} Bowen an Tucker-Drob have studied the space of so-called stable weak equivalence classes.

In this work we will firstly show that product of two sofic actions is also sofic (theorem \ref{theor: product is sofic}). We then show that there is a (non-unique) maximal element in the class of sofic actions of given sofic group with respect to the weak containment order (theorem \ref{theor: universal sofic action}) we will call it a {\em universal sofic action}. We then define a {\em maximal sofic approximation} to be any sofic approximation which approximates some (and hence any) universal action.
In the theorem \ref{theor: maximal sofic approximation} we show that any sofic action will be aprroximable by it in a nice fashion (without need to taking subsequences). Afterwards we recall the notion of doubly-quenched convergence introduced by Austin in \cite{Au16}. In the theorem \ref{theor: all sofic are strongly sofic} we prove using result of Austin that any sofic action admits a doubly-quenched approximation, the situation called by Hayes ``strong soficity'', with respect to maximal sofic approximation. In works \cite{H16a}, \cite{H16b} he showed that strong soficity of an action with respect to given sofic approximation has a lot of interesting consequences. %In the section \ref{sec: applications} we will discuss result of Hayes in the context of maximal sofic approximations: we will show that equality of measure and topological sofic entropies relative to maximal sofic approximation hold for algebraic actions(whenever measure sofic entropy is not $-\infty$). We also show the Pinsker product formula.
In the section \ref{sec: applications} we show how our results are combining with theorems of Hayes from \cite{H16a}, \cite{H16b}. It is a an important problem to understand whether for algebraic actions topological sofic entropy equals to the measure-theoretic one (then we endow this action with the Haar measure). In the corolary \ref{cor: algebraic} we use the result from \cite{H16a} to show that for maximal sofic approximations this equality holds whenever the measure-theoretic entropy is not $-\infty$. We also show in corolary \ref{cor: pinsker} that a product formula for Pinsker factors holds for maximal sofic approximations .

We study the class $\St$ of sofic groups all of whose actions are soficin the section \ref{sec: class s}. By a simple argument and using well-known result that all the treable actions are sofic (see \cite{EL10},\cite{Pa11}),  we show that all the treeable groups (groups admitting an essentially free treeable action) belong to this class. We also show that this class is closed under taking subgroups.

{\em Acknowledgements. } I would like to thank Ben Hayes for his comments. Research is supported by the Russian Science Foundation grant №14-21-00035.

\section{Measures and Kantorovich distance}
Let $G$ be a countable group. We will fix an arbitrary enumeration of its elements $(\gamma_i)_{i \in N}$. 

For a standard Borel space $X$ we will denote $\Me(X)$ the set of all the Borel probability measures on $X$. 

For a probability space $(X,\mu)$ let us denote $\MALG(X,\mu)$ the space of all the measurable subsets endowed with the symmetric difference distance.

Let $(X, r)$ be a compact metric space. We denote $l$ --- the {\em Kantorovich distance} on $\Me(X)$ defined by
\[
l(\mu_1,\mu_2) = \inf_\xi \int_{X \times X} r(x_1,x_2) d\xi(x_1,x_2), 
\]
there infimum is taken along all the couplings $\xi$ of measures $\mu_1$ and $\mu_2$.

It is known that the Kantorovich distance defines the weak* topology on the space $\Me(X)$.

Let $f$ be a map between two metric compacta $X$ and $Y$. If $f$ does not increase distance then the induced map on measures does not increase the Kantorovich distance.

A convention on metrics.
\begin{enumerate}
\item If metric for the space is explicitly stated then this is it.
\item If the space has the form $\Me(X)$ for $X$ a compact set with specified metric then we endow it with the Kantorovich distance.
\item If the space has the form $X^V$ there $V$ is a finite set and metric for $X$ is specified somehow then we will define metric on $X^V$ by
\[
d_{X^V}(t_1,t_2) = \sup_{v \in V} d_X(t_1(v),t_2(v))
\]
for $t_1,t_2 \in X^V$.
\item If the space has the form $X^G$ and for $X$ the metric is specified then we define the metric by
\[
d_{X^G} = \sup_{i \in \N} \frac1i d_X (t_1(\gamma_i),t_2(\gamma_i))
\]
for $t_1,t_2 \in X^G$.
\item All the finite sets we will endow with the discrete metric ($1$ for distinctive element and $0$ otherwise).
\end{enumerate}
\section{Obseravables}
All the actions of countable groups on probability spaces will be measure preserving if otherwise is not stated.

Let us fix a measure preserving action of countable group $G$ on the standard probability space $(X,\mu)$. An {\em observable} is any measrable map $f : X \to A $ where $A$ is a finite space. We denote $\Obs(X,\mu,A)$ the space of observables from $X$ to $A$. 
We will endow it with the metric: for $f_1, f_2 \in \Obs(X,\mu,A)$ it will be defined as $\mu(\lbrace x \vert f_1(x) \neq f_2(x)\rbrace)$. 
For observable $f$ we will define new map $f^G: X \to A^G$ by $(f^G(x))(g)= f(g x)$ for $x \in X$ and $g \in G$.
Consider a map $\distr: \Obs(X, \mu,A) \to \Me(A^G)$ defined as $f \mapsto f^{G}(\mu)$. We will also denote the action if needed: $\distr_T (f)$.

\begin{lem}\label{lem: distr continuous}
The map $\distr$ is continuous.  
\end{lem}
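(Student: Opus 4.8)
The plan is to bound the Kantorovich distance $l(\distr(f_1),\distr(f_2))$ from above directly, in terms of the observable distance $d(f_1,f_2)=\mu(\{x : f_1(x)\neq f_2(x)\})$, by exhibiting one explicit coupling rather than optimizing over all of them. First I would form, for any $f_1,f_2\in\Obs(X,\mu,A)$, the natural coupling of $\distr(f_1)=f_1^G(\mu)$ and $\distr(f_2)=f_2^G(\mu)$ obtained by pushing $\mu$ forward along the diagonal map $x\mapsto(f_1^G(x),f_2^G(x))\in A^G\times A^G$. Its two marginals are exactly $f_1^G(\mu)$ and $f_2^G(\mu)$, so it is admissible in the infimum defining $l$, and a change of variables gives
\[
l(\distr(f_1),\distr(f_2))\leq \int_X d_{A^G}\big(f_1^G(x),f_2^G(x)\big)\,d\mu(x).
\]

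Next I would unwind the metric on $A^G$. By the metric convention and since $A$ carries the discrete metric, for each $x$ we have $d_{A^G}(f_1^G(x),f_2^G(x))=\sup_{i\in\N}\frac1i\,\mathbf{1}[f_1(\gamma_i x)\neq f_2(\gamma_i x)]$. The supremum is the only awkward feature, so I would cut it at an index $N_0$: the tail is cheap because $\sup_{i>N_0}\frac1i\,\mathbf{1}[\cdots]\leq \frac{1}{N_0}$, while the head is bounded crudely (using $\frac1i\leq 1$) by $\sum_{i=1}^{N_0}\mathbf{1}[f_1(\gamma_i x)\neq f_2(\gamma_i x)]$. This yields the pointwise estimate $d_{A^G}(f_1^G(x),f_2^G(x))\leq \frac{1}{N_0}+\sum_{i=1}^{N_0}\mathbf{1}[f_1(\gamma_i x)\neq f_2(\gamma_i x)]$.

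The crucial input is that the action is measure preserving. Substituting $y=\gamma_i x$ identifies $\{x : f_1(\gamma_i x)\neq f_2(\gamma_i x)\}=\gamma_i^{-1}\{y : f_1(y)\neq f_2(y)\}$, so by invariance each of these sets has measure exactly $d(f_1,f_2)$, independent of $i$. Integrating the pointwise bound over $X$ (with $\mu$ a probability measure) therefore gives
\[
l(\distr(f_1),\distr(f_2))\leq \frac{1}{N_0}+N_0\,d(f_1,f_2).
\]
To conclude, given $\delta>0$ I would pick $N_0$ with $\frac{1}{N_0}<\delta/2$ and then require $d(f_1,f_2)<\delta/(2N_0)$, forcing $l(\distr(f_1),\distr(f_2))<\delta$; this proves continuity, and in fact uniform continuity.

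I expect the only genuinely delicate point to be the interplay between the supremum in the $A^G$ metric and the integral: a one-step bound by $\sum_i\frac1i\,\mathbf{1}[\cdots]$ is useless because $\sum_i\frac1i$ diverges, so the cutoff splitting into a cheap $\frac1{N_0}$ tail and a finite head tamed by measure preservation is exactly what makes the estimate summable and the argument succeed.
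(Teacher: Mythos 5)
Your proof is correct, and it takes a genuinely different route from the paper's. The paper argues softly: since the Kantorovich distance metrizes the weak* topology on $\Me(A^G)$ and weak* convergence of measures on the compact space $A^G$ is detected by cylinder sets, it suffices to observe that for each cylinder $C \subseteq A^G$ the map $f \mapsto (f^G)^{-1}(C) \in \MALG(X,\mu)$ is continuous, so the measure of every cylinder set depends continuously on $f$. You instead work with the Kantorovich metric directly: you produce an explicit admissible coupling (the push-forward of $\mu$ under $x \mapsto (f_1^G(x), f_2^G(x))$), split the supremum defining $d_{A^G}$ at a cutoff $N_0$, and use measure preservation to evaluate each of the $N_0$ head terms as exactly $\mu(\{f_1 \neq f_2\})$, arriving at the clean bound $l(\distr(f_1),\distr(f_2)) \le \tfrac{1}{N_0} + N_0\,\mu(\{f_1 \neq f_2\})$. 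Your approach buys more: it yields uniform continuity with an explicit modulus, and it does not invoke the (quoted but unproved) equivalence between the Kantorovich metric and the weak* topology, since you never leave the metric. The paper's argument is shorter and estimate-free, at the price of relying on that equivalence. You also correctly identify the crucial input — measure preservation is exactly what makes each set $\{x : f_1(\gamma_i x) \neq f_2(\gamma_i x)\}$ have measure $\mu(\{f_1 \neq f_2\})$ independently of $i$, and your observation that a naive bound by $\sum_i \tfrac1i \mathbf{1}[\cdots]$ diverges, so the cutoff is necessary, is the right diagnosis of where the only subtlety lies.
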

\begin{proof}
It is enough to prove that measure of any cylinder set depends continuously on $f$. We can see that for any cylinder subset $C$ of $A^G$ the map $\Obs(X,\mu,A) \to \MALG(X,\mu) $ defined by $f \mapsto {(f^G)}^{-1}(C)$ is continuous. This implies the desired.
\end{proof}

Let $V$ be any set. Let $A$, $B$ be any finite sets. Let $\pi$ be a map from $B$ to $A$. We will denote $\pi^V: B^V \to A^V$ to be the map defined naturally by applying $\pi$ coordinate-wise. We will define a {\em tower of observables} to be the sequence of observables $f_i : X \to A_i$ together with the maps $\pi_{i,j}: A_i \to A_j$ such that for any $i<j$ holds $(\pi_{i,j} \circ f_i)(x) = f_j(x)$ for $\mu$-a.e. $x \in X$ and that for any $i<j<k$ we have $\pi_{j,i} \circ \pi_{k,j} = \pi_{k,i}$. We will say that the tower is {\em sufficient} if for any observable $f: X \to B$ and $\varepsilon>0$ there is an index $i$ and a map $\pi': A_i \to B$ such that observable $\pi' \circ f_i$ is $\varepsilon$-close to $f$. We note that it is easy to construct a sufficient tower of observables on the standard probability space.

We will say that p.m.p. action of $G$ on $(X_1,\mu_1)$ is {\em weakly contained} in the p.m.p. action of $G$ on $(X_2,\mu_2)$ if for any observable $f_1: X_1 \to A $ and any $\varepsilon>0$ there is an observable $f_2 : X_2 \to A$ such that $\distr(f_2)$ is $\varepsilon$-close to $\distr(f_2)$.

\section{Sofic groups and sofic actions}
For a finite set $V$ we will denote $\Sym(V)$ the group of all its permutations. We will endow it with the normalised Hamming distance:
\[
d_H(g_1,g_2) = \frac{ \lvert \lbrace v \in V \;,\; g_1(v) \neq g_2(v)\rbrace\rvert }{\lvert V\rvert} 
\]

Let $G$ be a countable group. A {\em sofic approximation} $\So$ is a sequence of finite sets $(V_i)$ together with a sequence of maps $(\sigma_i)$, $\sigma_i : G \to \Sym(V_i)$ (we will use the notation $\sigma_i : g \mapsto \sigma_i^g$) satisfying two properties:
\begin{enumerate}
\item $\lim_{i \to \infty} d_H(\sigma_i^{g_1},\sigma_i^{g_2}) = 1$ for any $g_1 \neq g_2$ from $G$,
\item $\lim_{i \to \infty} d_H(\sigma_i^{g_1} \circ \sigma_i^{g_2}, \sigma_i^{g_1 g_2}) = 0$ for any $g_1,g_2$ from $G$.
\end{enumerate}

A group is said to be {\em sofic} if it has at least one sofic approximation. 

Let $\sigma$ be a map from $G$ to $\Sym(V)$ for some finite set $V$. Let $X$ be any set. We define a map $\theta_{v, \sigma}: X^V \to X^G$ by equality
\[
(\theta_{v,\sigma}(\tau))(g) = \tau(\sigma^g(v)).
\]
We will denote $\Theta_{v,\sigma}$ the map from $\Me(X^V)$ to $\Me(X^G)$ defined by equality
\[
\Theta_{\sigma}(\eta) = \frac1{\lvert V \rvert}\sum_{v \in V} \theta_{v,\sigma}(\eta).
\]
We will omit $\sigma$ in the notations above if it is clear from the context.

Let $\So = ((V_i),(\sigma_i))$ be a sofic approximation. Let $G$ be acting on the the standard probability space $(X,\mu)$. We will say that this action is $\So$-weakly approximable if for any observable $f: X \to A $ there is a growing sequence $(i_j)$ of natural numbers and a sequence $(\tau_j)$, $\tau_j \in A^{V_{i_j}}$ such that 
\[
\lim_{j \to \infty} \Theta(\delta_{\tau_{i_j}}) = \distr(f).
\]  
It is not hard to see that 
We will say that this action is $\So$-approximable if for any observable $f: X \to A $ there is a  a sequence $(\tau_i)$, $\tau_i \in A^{V_i}$ such that 
\[
\lim_{i \to \infty} \Theta(\delta_{\tau_i}) = \distr(f).
\] 
We will say that action is {\em sofic} if it is $\So$-weakly aproximable for some sofic approximation $\So$ of the group $G$.

For fixed sofic group $G$ we will say that the map $\sigma: G \to \Sym(V)$ is $k$-good if
\begin{enumerate}
\item $ d_H(\sigma_i^{\gamma_i},\sigma_i^{\gamma_j}) > 1 - 1/k$ for any $i < j \leq k $,
\item $ d_H(\sigma_i^{\gamma_i} \circ \sigma_i^{\gamma_j}, \sigma_i^{\gamma_i \gamma_j}) < 1/k$ for any $i,j \leq k$.
\end{enumerate}

It is not hard to see that we can reformulate now the requirements from the definition of sofic approximation in the following way: for any natural $k$ there is such $i_0$ that for any $i>i_0$ we have that $\sigma_i$ is $k$-good.
%An action is said to be $\So$-weakly approximable if it h

It is easy to see that an action $G \curvearrowright (X,\mu)$ is sofic iff for any $f:X \to A$ and for any natural $k$ there is a $k$-good map $G \to \Sym(V)$ for some finite $V$ and an element $\tau \in A^V$ with $\Theta_{\sigma}(\delta_{\tau})$ being $1/k$-close to $\distr(f)$.

Let $(f_i)$, $f_i : X \to A_i$ be a sufficient tower of observables on $(X,\mu)$. Then an action $T$ of $G$ on $(X,\mu)$ is sofic iff for any natural $k$ we have that there is $\tau \in A_k^V$ and $\sigma: G \to \Sym(V)$ --- such a $k$-good map(for some finite $V$) that $\Theta_{\sigma}(\delta_\tau)$ is $1/k$-close to $\distr(f_k)$.

\begin{lem}\label{lem: sofic approximation for sofic action}
Any sofic action $T$ of sofic group $G$ on a standard probability space $(X,\mu)$ is $\So$-approximable for some sofic approximation $\So$. 
\end{lem}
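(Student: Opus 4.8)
The plan is to construct a single sofic approximation $\So$ directly from the soficity of $T$, using a sufficient tower of observables as a fixed ``resolution scale'', and then to verify the stronger (full-sequence, all-observable) approximability by transporting the approximants along the tower maps. First I would fix a sufficient tower of observables $(f_k)$, $f_k : X \to A_k$, with compatibility maps $\pi_{k,j}$, which exists on a standard probability space. Applying the characterization of soficity via a sufficient tower stated just above the lemma to each $f_k$, for every $k$ I obtain a finite set $V_k$, a $k$-good map $\sigma_k : G \to \Sym(V_k)$, and an element $\tau_k \in A_k^{V_k}$ with $l(\Theta_{\sigma_k}(\delta_{\tau_k}), \distr(f_k)) < 1/k$. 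I claim $\So := ((V_k),(\sigma_k))$ is a sofic approximation: since the conditions defining $m$-goodness are contained in those defining $k$-goodness whenever $m \leq k$, each $\sigma_k$ is $m$-good for all $m \leq k$, so for fixed $m$ every $\sigma_i$ with $i > m$ is $m$-good; by the reformulated definition recalled in the excerpt this is exactly soficity of $\So$.

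Next I would establish full-sequence convergence for the tower elements themselves. Fix $n$ and, for $k \geq n$, set $\tau_k' := \pi_{k,n}^{V_k}(\tau_k) \in A_n^{V_k}$. Two functoriality facts drive the argument: applying a map coordinate-wise commutes with $\Theta$, that is $\Theta_{\sigma_k}(\delta_{\pi_{k,n}^{V_k}(\tau_k)}) = (\pi_{k,n}^G)_*\,\Theta_{\sigma_k}(\delta_{\tau_k})$, and, by the tower compatibility $\pi_{k,n}\circ f_k = f_n$ a.e., $(\pi_{k,n}^G)_*\distr(f_k) = \distr(f_n)$. Since $\pi_{k,n}$ is distance non-increasing for the discrete metrics, so is $\pi_{k,n}^G$, and hence $(\pi_{k,n}^G)_*$ does not increase the Kantorovich distance. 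Combining these, $l(\Theta_{\sigma_k}(\delta_{\tau_k'}),\distr(f_n)) \leq l(\Theta_{\sigma_k}(\delta_{\tau_k}),\distr(f_k)) < 1/k$ for all $k \geq n$, so $\Theta_{\sigma_k}(\delta_{\tau_k'}) \to \distr(f_n)$ along the full sequence.

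Finally I would extend to an arbitrary observable $f : X \to B$. Given $\delta > 0$, continuity of $\distr$ (Lemma \ref{lem: distr continuous}) yields $\varepsilon>0$ such that any observable $\varepsilon$-close to $f$ has $\distr$ within $\delta$ of $\distr(f)$; sufficiency of the tower then supplies an index $n$ and a map $\rho : A_n \to B$ with $\rho \circ f_n$ being $\varepsilon$-close to $f$. Setting $\upsilon_k := \rho^{V_k}(\tau_k')$ and again using $\Theta_{\sigma_k}\circ(\rho^{V_k})_* = (\rho^G)_*\circ\Theta_{\sigma_k}$ together with $(\rho^G)_*\distr(f_n) = \distr(\rho\circ f_n)$ and the non-increasing property, I obtain $l(\Theta_{\sigma_k}(\delta_{\upsilon_k}),\distr(f)) < 1/k + \delta$ for all $k \geq n$. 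Running this for a sequence $\delta_m \to 0$ (with associated indices $n_m$) and splicing the corresponding sequences along an increasing sequence of cutoffs $K_m \geq n_m$ with $K_m \to \infty$ produces a single sequence $(\upsilon_k)$ with $\Theta_{\sigma_k}(\delta_{\upsilon_k}) \to \distr(f)$, which is exactly $\So$-approximability.

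The main obstacle is precisely this last upgrade: soficity only delivers subsequential, per-observable, $1/k$-approximate data, whereas $\So$-approximability demands that one fixed sofic approximation serve every observable simultaneously along its full sequence with exact convergence. The content of the argument is that the tower maps, through the functoriality of $\Theta$ and $\distr$ under coordinate-wise maps, let the approximants built for the coarse tower elements be refined and transported to all observables, with a diagonalization converting the uniform $\delta$-gaps into genuine convergence.
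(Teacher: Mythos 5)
Your proof is correct and takes essentially the same route as the paper: fix a sufficient tower of observables, choose for each $k$ a $k$-good map $\sigma_k$ with an approximant $\tau_k$ for the tower level $f_k$, and transport these along the tower projections $\pi_{k,n}$ using functoriality of $\Theta$ and the fact that distance-non-increasing maps do not increase the Kantorovich distance. The only difference is that you spell out what the paper leaves implicit --- the verification that $(\sigma_k)$ is itself a sofic approximation, and the diagonalization upgrading approximation of the tower elements to approximation of arbitrary observables.
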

\begin{proof}
We fix a sufficient tower of observables $(f_i)$, $f_i : X \to A_i$ and take $\sigma_i: G \to \Sym(V_i)$ such that it is $i$-good and that there is such a $\tau \in A_i^{V_i}$ that $\Theta_{\sigma_i}(\delta_\tau)$ is $1/i$-close to $\distr(f_i)$. It is not hard to see now that for any natural $j<i$ we have that $\Theta_{\sigma_i}(\delta_{\pi_{i,j}^{V_i}(\tau)})$ is $1/i$-close to $\distr(f_j)$. Hence our action is approximable with respect to constructed sofic approximation.
\end{proof}
\begin{theor}\label{theor: product is sofic}
Direct product of two sofic actions of sofic group on standard probability spaces is a sofic action.
\end{theor}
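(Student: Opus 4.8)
The plan is to verify the $k$-good characterization of soficity recorded just before the statement: given an observable $f:X_1\times X_2\to A$ and a natural number $k$, I must produce a finite set $V$, a $k$-good map $\sigma:G\to\Sym(V)$ and an element $\tau\in A^V$ with $\Theta_\sigma(\delta_\tau)$ being $1/k$-close to $\distr(f)$. Here $T_1$ acts on $(X_1,\mu_1)$, $T_2$ on $(X_2,\mu_2)$, and the product acts on $(X_1\times X_2,\mu_1\times\mu_2)$. The first reduction is to \emph{product observables}. Since finite unions of measurable rectangles $B_1\times B_2$ form an algebra generating the product $\sigma$-algebra, they are dense in $\MALG(X_1\times X_2,\mu_1\times\mu_2)$; approximating each level set $f^{-1}(a)$ by such a union and collecting the factors yields finite partitions of $X_1$ and of $X_2$, that is observables $f_1:X_1\to A_1$ and $f_2:X_2\to A_2$ together with a map $h:A_1\times A_2\to A$, so that $h\circ(f_1\times f_2)$ is arbitrarily close to $f$ in the observable metric. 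By Lemma \ref{lem: distr continuous} this forces $\distr(h\circ(f_1\times f_2))$ close to $\distr(f)$. Moreover $\distr(h\circ g)=(h^G)_*\distr(g)$, the map $h^G$ is distance-nonincreasing, and $(h^G)_*\Theta_\sigma(\delta_\tau)=\Theta_\sigma(\delta_{h^V\tau})$ with $h^V\tau\in A^V$; hence it suffices to approximate $\distr(f_1\times f_2)$ on $(A_1\times A_2)^G$ and then push forward along $h$.

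The heart of the argument is a product construction for the approximating data. Using soficity of each factor I obtain, for a parameter to be fixed below, $2k$-good maps $\sigma_1:G\to\Sym(V_1)$ and $\sigma_2:G\to\Sym(V_2)$ and elements $\tau_1\in A_1^{V_1}$, $\tau_2\in A_2^{V_2}$ with $\Theta_{\sigma_1}(\delta_{\tau_1})$ and $\Theta_{\sigma_2}(\delta_{\tau_2})$ close to $\distr(f_1)$ and $\distr(f_2)$. I then set $V=V_1\times V_2$, define $\sigma^g=\sigma_1^g\times\sigma_2^g$ acting coordinatewise, and put $\tau(v_1,v_2)=(\tau_1(v_1),\tau_2(v_2))$. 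A direct computation from the definition of $\theta_{v,\sigma}$ gives, under the identification $(A_1\times A_2)^G\cong A_1^G\times A_2^G$, the key identity
\[
\Theta_\sigma(\delta_\tau)=\Theta_{\sigma_1}(\delta_{\tau_1})\times\Theta_{\sigma_2}(\delta_{\tau_2}),
\]
whereas on the target side $\distr(f_1\times f_2)=\distr(f_1)\times\distr(f_2)$. Thus the whole problem collapses to comparing a product of empirical distributions with a product of genuine distributions.

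Two verifications remain. First, $\sigma$ is $k$-good: the set where $\sigma^{\gamma_i}$ and $\sigma^{\gamma_j}$ agree is the product of the agreement sets on the factors, so the separation bound multiplies and stays below $1/k$, while the multiplicativity defect obeys the union bound $d_H(\sigma^{\gamma_i}\circ\sigma^{\gamma_j},\sigma^{\gamma_i\gamma_j})\le d_H(\sigma_1^{\gamma_i}\circ\sigma_1^{\gamma_j},\sigma_1^{\gamma_i\gamma_j})+d_H(\sigma_2^{\gamma_i}\circ\sigma_2^{\gamma_j},\sigma_2^{\gamma_i\gamma_j})$, which is exactly why the factors are taken $2k$-good. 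Second, the product operation on measures is continuous with an explicit modulus: under the identification above the metric on $(A_1\times A_2)^G$ is the maximum of the two factor metrics, so coupling optimal plans coordinatewise shows $l(P_1\times P_2,Q_1\times Q_2)\le l(P_1,Q_1)+l(P_2,Q_2)$; approximating each factor distribution well enough therefore makes $\Theta_\sigma(\delta_\tau)$ as close as required to $\distr(f_1\times f_2)$. Pushing forward along $h$ and combining with the first paragraph completes the proof.

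I expect the only delicate point to be bookkeeping of constants: the $2k$-goodness needed for the combinatorial conditions on $\sigma$ and the three separate closeness estimates (the observable approximation of $f$ by $h\circ(f_1\times f_2)$, and the two factor approximations fed into $l(P_1,Q_1)+l(P_2,Q_2)$) must be budgeted so as to yield the single $1/k$ bound. This is routine once the pieces are in place; the conceptual core is simply the displayed identity, which says that the empirical distribution of the product configuration on $V_1\times V_2$ \emph{is} the product of the empirical distributions on $V_1$ and $V_2$.
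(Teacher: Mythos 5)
Your proof is correct and takes essentially the same route as the paper: both construct the product sofic approximation $\sigma_1\times\sigma_2$ on $V_1\times V_2$, take product configurations $\tau_1\otimes\tau_2$, and rest on the key identity $\Theta_{\sigma_1\times\sigma_2}(\delta_{\tau_1\otimes\tau_2})=\Theta_{\sigma_1}(\delta_{\tau_1})\otimes\Theta_{\sigma_2}(\delta_{\tau_2})$ together with continuity of the product of measures. The only difference is packaging: the paper reduces to a sufficient tower of product observables and passes to limits along fixed sofic approximations, whereas you use the $k$-good finite characterization with explicit constant bookkeeping (including the rectangle-approximation step that the paper compresses into ``this tower is sufficient'').
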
 
\begin{proof}
Let $T',T''$,  be two sofic actions of $G$ on standard probability spaces $(X',\mu')$,$(X',\mu')$ respectively. Let $(f_i')$, $f'_i : X' \to A_i'$ be a sufficient tower of observables for $(X',\mu')$ and $(f''_i)$, $f''_i : X' \to A_i'$ --- on $(X'',\mu'')$. Let us consider a tower of observables $(f'_i \otimes f''_i)$ on $(X' \times X'', \mu' \otimes \mu'')$ defined by 
\[
f'_i \otimes f''_i : (x',x'') \mapsto (f'_i(x'),f''_i(x'')).
\]
It is not hard to see that this tower of observables is sufficient.
Suppose $T'$ is $\So'$-approximable and $T''$ is $\So''$-approximable for some sofic approximations $\So',\So''$. For a natural $j$ let $(\tau'_i), \tau'_i \in {A'_j}^{V_i}$ be such a sequence that 
\[
\lim_{i \to \infty} \Theta_{\sigma'_i}(\delta_{\tau'_i}) = \distr(f'_j).
\] 
Analogously we take $(\tau''_i), \tau''_i \in {A''_j}^{V_i}$ be such a sequence that 
\[
\lim_{i \to \infty} \Theta_{\sigma''_i}(\delta_{\tau''_i}) = \distr(f''_j).
\] 
We now consider the sequence $(\tau'_i \otimes \tau''_i)$, $\tau'_i \otimes \tau''_i : V'_i \times V''_i \to A'_j \times A''_j$ defined by 
\[
(\tau'_i \otimes \tau''_i)(v',v'') = (\tau'_i(v'), \tau''_i(v'')).
\]
We define a sofic approximation $\So' \times \So''$ in such a way that $\sigma'_i \times \sigma''_i: G \to \Sym(V'_i \times V''_i)$ and 
\[
(\sigma'_i \times \sigma''_i)^g(v',v'') = ({\sigma'}^g_i(v'),{\sigma''}^g_i(v''))
\]
for any $g \in G$, $v' \in V'_i$ and $v'' \in V''_i$.
It is a sofic approximation indeed and we can check that
\[
\lim_{i \to \infty} \Theta_{\sigma'_i \times \sigma''_i}(\delta_{\tau'_i} \otimes \delta_{\tau''_i}) = \lim_{i \to \infty} \Theta_{\sigma'_i \times \sigma''_i}(\delta_{\tau'_i \otimes \tau''_i}) = \distr(f'_i \otimes f''_i).
\]
This implies that $T' \times T'$ is $\So' \times \So''$-approximable and hence sofic.
\end{proof}

\begin{lem}
Inverse limit of sofic actions is sofic. 
\end{lem}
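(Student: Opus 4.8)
The plan is to invoke the characterization of soficity recalled just above Lemma~\ref{lem: sofic approximation for sofic action}: it suffices to show that for every observable $f : X \to A$ and every natural $k$ there is a $k$-good map $\sigma : G \to \Sym(V)$ (for some finite $V$) and a $\tau \in A^V$ with $\Theta_\sigma(\delta_\tau)$ being $1/k$-close to $\distr(f)$. Here $T$ denotes the inverse limit of a system of sofic actions $T_n$ of $G$ on $(X_n,\mu_n)$, realised on $(X,\mu)$ with the canonical $G$-equivariant measure preserving projections $\phi_n : X \to X_n$; I assume as usual that the connecting maps are $G$-equivariant and measure preserving, so that the pullbacks $\phi_n^{-1}(\mathcal{B}_{X_n})$ form an increasing algebra whose union generates the measure algebra of $(X,\mu)$ modulo null sets.

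The technical heart of the argument is the reduction of an arbitrary observable on the limit to an observable pulled back from a single level $X_n$. First I would fix $f : X \to A$ and $k$, and observe that since $\bigcup_n \phi_n^{-1}(\mathcal{B}_{X_n})$ generates the measure algebra, each level set $f^{-1}(a)$, $a \in A$, is approximable in $\MALG(X,\mu)$ by a set of the form $\phi_n^{-1}(\cdot)$; as $A$ is finite and the algebra is increasing in $n$, a single $n$ can be made to work for all $a$ simultaneously, and after a routine cleanup this produces an observable $g : X_n \to A$ with $g \circ \phi_n$ arbitrarily close to $f$ in the observable metric. By Lemma~\ref{lem: distr continuous} (continuity of $\distr$), given the target precision $1/(2k)$ I may choose $n$ and $g$ so that $l(\distr_T(f), \distr_T(g \circ \phi_n)) < 1/(2k)$.

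Next I would identify the two distributions $\distr_T(g \circ \phi_n)$ and $\distr_{T_n}(g)$. Since $\phi_n$ is $G$-equivariant one has $(g \circ \phi_n)^G = g^G \circ \phi_n$ pointwise, and since $\phi_n$ is measure preserving, pushing $\mu$ forward gives $\distr_T(g \circ \phi_n) = (g\circ\phi_n)^G(\mu) = g^G(\phi_n(\mu)) = g^G(\mu_n) = \distr_{T_n}(g)$. Now the soficity of $T_n$ enters: applied to the observable $g$ and to the natural number $2k$, the criterion furnishes a $2k$-good map $\sigma : G \to \Sym(V)$ and a $\tau \in A^V$ with $\Theta_\sigma(\delta_\tau)$ being $1/(2k)$-close to $\distr_{T_n}(g) = \distr_T(g \circ \phi_n)$. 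A $2k$-good map is in particular $k$-good, so combining the two estimates by the triangle inequality yields $l(\Theta_\sigma(\delta_\tau), \distr_T(f)) < 1/(2k) + 1/(2k) = 1/k$ with $\sigma$ being $k$-good, which is exactly what the criterion demands.

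I expect the only real obstacle to be the first reduction: making precise that observables of the inverse limit are approximable by pullbacks of observables from the finite levels, that is, the generation statement for the inverse limit measure algebra together with the finite-set cleanup that turns the approximating sets into a genuine $A$-valued observable $g$ on some $X_n$. Everything after that --- the identity $\distr_T(g\circ\phi_n) = \distr_{T_n}(g)$, the monotonicity of $k$-goodness in $k$, and the triangle-inequality bookkeeping --- is routine.
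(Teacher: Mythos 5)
Your proposal is correct and follows essentially the same route as the paper's own proof: reduce an arbitrary observable on the inverse limit to one measurable with respect to a single level (the paper phrases this via an increasing sequence of $G$-invariant subalgebras whose join is everything), use the continuity of $\distr$ to control the error, apply soficity of the corresponding factor to that observable, and finish with the triangle inequality. Your explicit remark that a $2k$-good map is $k$-good is a detail the paper leaves implicit, but otherwise the two arguments coincide.
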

\begin{proof}
Let $T$ be an action of $G$ on a standard probability space $(X,\mu)$, let $(\sA_i)$ be a an increasing sequence of $G$-invariant subalgebras whose join is the whole algebra of measurable sets on $X$. Suppose that all the corresponding factors are sofic. 
Fix any observable $f: X \to A$ and a natural $k$. We can find such a close observable $f': X \to A$ that it is $\sA_i$-measurable for some $i$ and that $\distr(f')$ is $1/(2k)$-close to $\distr(f)$ (by lemma \ref{lem: distr continuous}).  
Since factor corresponding to $\sA_i$ is sofic, we can find a $k$-good map $\sigma : G \to \Sym(V)$ for some finite $V$ and $\tau \in A^V$ such that $\Theta_{\sigma}(\delta_{\tau})$ is $1/(2k)$-close to $\distr(f')$. We have now that $\Theta_{\sigma}(\delta_{\tau})$ is $1/k$-close to $\distr(f)$. This implies that $T$ is a sofic action.  
\end{proof}

\begin{lem}\label{lem: countable products}
Countable product of sofic actions is sofic.
\end{lem}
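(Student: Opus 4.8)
The plan is to combine Theorem \ref{theor: product is sofic} with the inverse-limit lemma just established. First I would upgrade the two-factor statement to a finite-factor one by induction: if $T_1, T_2, \dots$ are the given sofic actions of $G$, then for each $N$ the finite product $\prod_{n=1}^{N} T_n$ is sofic, because the product of $N+1$ factors is the two-fold product of the action $\prod_{n=1}^{N} T_n$ (sofic by the inductive hypothesis) with the sofic action $T_{N+1}$, and a finite product of standard probability spaces is again standard, so Theorem \ref{theor: product is sofic} applies at each step.

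Next I would realize the full countable product as an inverse limit of these finite products. Write $(X,\mu) = \prod_{n=1}^{\infty}(X_n,\mu_n)$ with the coordinatewise product action $T$, let $\pr_N : X \to \prod_{n=1}^{N} X_n$ be the projection onto the first $N$ coordinates, and let $\sA_N$ be the $\sigma$-subalgebra of $\pr_N$-measurable sets. Since $T$ is the product action, each $\pr_N$ is $G$-equivariant, so every $\sA_N$ is $G$-invariant; plainly $\sA_N \subseteq \sA_{N+1}$, and since finite cylinder sets generate the product $\sigma$-algebra, the join $\bigvee_N \sA_N$ is the entire measure algebra of $(X,\mu)$. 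The factor of $T$ associated to $\sA_N$ is exactly the finite product action $\prod_{n=1}^{N} T_n$, which is sofic by the previous paragraph.

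With this data in place I would simply invoke the preceding inverse-limit lemma: an action carrying an increasing sequence of $G$-invariant subalgebras, all of whose corresponding factors are sofic and whose join is everything, is itself sofic. This gives that $T$ is sofic, as desired.

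I do not expect a genuine obstacle, since both key ingredients are already available; the work is bookkeeping rather than substance. The only points needing care are verifying that a countable product of standard probability spaces is again standard (so that the phrase ``sofic action on a standard probability space'' remains meaningful for the limit), and confirming that the cylinder subalgebras $\sA_N$ are indeed $G$-invariant and generate the full algebra --- both of which are immediate from the fact that we are dealing with the coordinatewise product action.
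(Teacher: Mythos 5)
Your proof is correct and follows exactly the route the paper intends: the paper's own proof is the one-line remark that the lemma ``is a direct consequence of two previous lemmata,'' namely Theorem \ref{theor: product is sofic} and the inverse-limit lemma, and your write-up simply supplies the bookkeeping (finite products by induction, then the cylinder subalgebras $\sA_N$ realizing the countable product as an inverse limit) that the paper leaves implicit.
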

\begin{proof}
It's a direct consequence of two previous lemmata.
\end{proof}

We are now ready to prove that for any countable group there is a universal sofic action which weakly contains any other.
\begin{theor}\label{theor: universal sofic action}
For any sofic group $G$ there is such a sofic action $T$ that any other sofic action is weakly contained in it.
\end{theor}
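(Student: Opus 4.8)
The plan is to build the universal action as a countable direct product, exploiting that Lemma~\ref{lem: countable products} keeps us inside the class of sofic actions. The key observation is that, because every alphabet $A$ is finite, the space $\Me(A^G)$ is compact and metrizable (the Kantorovich distance induces the weak$^*$ topology), hence separable. Consequently the collection of all observable distributions realizable by sofic actions into a fixed $A$,
\[
\mathcal D_A = \bigcup_{T \text{ sofic}} \{\distr_T(f) : f \in \Obs(X_T,\mu_T,A)\} \subseteq \Me(A^G),
\]
is a separable metric space and therefore admits a countable dense subset. Since every finite set is in bijection with some $\{1,\dots,m\}$, and such a bijection induces a distribution-preserving homeomorphism $\Me(A^G) \to \Me(A'^G)$, it suffices to range over the countably many alphabets $A_m = \{1,\dots,m\}$.

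First I would fix, for each $m$, a countable dense subset $\{\xi^m_k\}_k$ of $\mathcal D_{A_m}$, choosing each $\xi^m_k$ to lie in $\mathcal D_{A_m}$ itself so that it is genuinely realized: $\xi^m_k = \distr_{T^m_k}(f^m_k)$ for some sofic action $T^m_k$ and some observable $f^m_k$. Collecting the countably many actions $T^m_k$ and forming their product $T^\ast = \prod_{m,k} T^m_k$, Lemma~\ref{lem: countable products} guarantees that $T^\ast$ is sofic. The remaining task is to verify that every sofic action is weakly contained in $T^\ast$.

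For this I would use that each coordinate factor of $T^\ast$ transfers its achievable distributions to $T^\ast$ without loss: if $\pi : T^\ast \to T^m_k$ is the projection, then $G$-equivariance gives $(f^m_k \circ \pi)^G = (f^m_k)^G \circ \pi$, so $\distr_{T^\ast}(f^m_k \circ \pi) = \distr_{T^m_k}(f^m_k) = \xi^m_k$; thus every $\xi^m_k$ is exactly an observable distribution of $T^\ast$. Now, given any sofic $T$, an observable $f : X_T \to A_m$, and $\varepsilon>0$, the distribution $\distr_T(f)$ lies in $\mathcal D_{A_m}$, hence is within $\varepsilon$ of some $\xi^m_k$, which equals $\distr_{T^\ast}(f^m_k \circ \pi)$. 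Therefore $T^\ast$ carries an observable whose distribution is $\varepsilon$-close to $\distr_T(f)$, which is precisely weak containment of $T$ in $T^\ast$.

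The only genuinely conceptual step, and the one I would flag as the crux, is the separability of $\mathcal D_A$; everything else is bookkeeping once the earlier lemmas are in place. The point requiring the most care is ensuring that the countable dense family is realized by honest sofic actions rather than by mere limit distributions, which is handled by selecting the dense set inside $\mathcal D_A$ itself, so that no closure is needed and the $\varepsilon$-approximate form of weak containment is matched exactly. A secondary bit of care is the reduction to countably many alphabets via relabeling bijections.
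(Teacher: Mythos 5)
Your proposal is correct and takes essentially the same route as the paper: for each finite alphabet size, choose a countable dense family of observable distributions realized by sofic actions, take the countable product of the realizing actions (sofic by Lemma~\ref{lem: countable products}), and deduce weak containment because every distribution in the dense families is exactly realized in the product via coordinate projections. The only difference is one of detail, not substance --- you spell out the separability of $\Me(A^G)$, the reduction to countably many alphabets, and the projection argument that the paper compresses into ``it is easy to check.''
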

\begin{proof}
The proof is completely analogous to that of the fact stating the existence of weakly maximal action among all the actions of the given group (see \cite{GTW06}, \cite{K10} and \cite{BuK16}. Of course we will need the previous lemma.
For every natural $i$ let $A_i$ be a set of size $i$. We now consider the set $D_i$ of all such measures $\mu$ on $A_i^G$ that there is a sofic action $T$ on some standard probability space $(X,\mu)$ and an observable $f: X \to A_i$ that $\mu= \distr_T(f)$. We now pick a countable dense set of measures from this set and for each measure take a corresponding action. We now proceed like this for all the natural $i$ which gives us a countable collection of sofic actions. Their product will be sofic by the previous lemma. It is easy to check that any sofic approximation is weakly contained in this product.
\end{proof}

We will call any such action from the previous theorem a {\em universal sofic action}. Sofic approximation $\So$ is called {\em maximal} if any universal sofic action is $\So$-approximable.

We now show that maximal sofic approximation approximates any sofic action.
\begin{theor}\label{theor: maximal sofic approximation}
If $\So$ is a maximal sofic approximation for sofic group $G$ then any sofic action is $\So$-approximable. 
\end{theor}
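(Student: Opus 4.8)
The plan is to combine three ingredients: the universality of a maximal action (which supplies weak containment), the defining property of a maximal sofic approximation (which supplies $\So$-approximability of the universal action), and a diagonal argument that upgrades approximate matching of distributions into genuine convergence along the fixed vertex sets $V_i$.

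First I would fix a universal sofic action $U$ of $G$ on some standard probability space $(Y,\nu)$, whose existence is guaranteed by Theorem \ref{theor: universal sofic action}. Since $\So$ is maximal, $U$ is $\So$-approximable by the definition of maximality. Now let $T$ be an arbitrary sofic action of $G$ on $(X,\mu)$. By universality of $U$, the action $T$ is weakly contained in $U$. To prove that $T$ is $\So$-approximable it suffices to show that for every observable $f : X \to A$ there is a sequence $(\tau_i)$, $\tau_i \in A^{V_i}$, with $\lim_{i \to \infty}\Theta_{\sigma_i}(\delta_{\tau_i}) = \distr_T(f)$.

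So fix such an $f$. For each $n \in \N$, weak containment provides an observable $f'_n : Y \to A$ (note that the alphabet $A$ is preserved) such that $\distr_U(f'_n)$ is $1/n$-close to $\distr_T(f)$ in the Kantorovich distance on $\Me(A^G)$. Since $U$ is $\So$-approximable, for each $n$ there is a sequence $(\tau^{(n)}_i)_i$, $\tau^{(n)}_i \in A^{V_i}$, with $\Theta_{\sigma_i}(\delta_{\tau^{(n)}_i}) \to \distr_U(f'_n)$ as $i \to \infty$. Hence there is an index $i_n$ such that $\Theta_{\sigma_i}(\delta_{\tau^{(n)}_i})$ is $1/n$-close to $\distr_U(f'_n)$, and therefore $2/n$-close to $\distr_T(f)$ by the triangle inequality, for all $i \geq i_n$; I may arrange $i_1 < i_2 < \cdots$. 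Defining $\tau_i := \tau^{(n)}_i$ for $i_n \leq i < i_{n+1}$ yields a single sequence in $A^{V_i}$ along which $\Theta_{\sigma_i}(\delta_{\tau_i})$ is $2/n$-close to $\distr_T(f)$ on the $n$-th block, so the error tends to $0$ as $i$ (and hence $n$) grows. Thus $\Theta_{\sigma_i}(\delta_{\tau_i}) \to \distr_T(f)$, and since $f$ was arbitrary, $T$ is $\So$-approximable.

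The only real subtlety is this last interleaving step: weak containment and $\So$-approximability each guarantee only matching of distributions up to a controlled error, so one must combine the two so that the resulting sequence lives on the \emph{same} vertex sets $V_i$ and converges \emph{exactly} to $\distr_T(f)$. The diagonalization is routine precisely because all three notions involved are phrased through the same Kantorovich metric on $\Me(A^G)$ and weak containment keeps the alphabet $A$ fixed, so there is no need to reconcile different alphabets or different approximations. I expect no deeper obstacle.
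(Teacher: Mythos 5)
Your proof is correct and follows essentially the same route as the paper's: weak containment of $T$ in a universal sofic action (Theorem \ref{theor: universal sofic action}) combined with the defining property of a maximal sofic approximation. The paper's own proof is only two sentences and leaves the transfer of $\So$-approximability across weak containment entirely implicit; your block-wise diagonalization (choosing $\tau_i := \tau^{(n)}_i$ for $i_n \leq i < i_{n+1}$) is precisely the routine detail it suppresses, so no further comparison is needed.
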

\begin{proof}
Any sofic action is weakly contained in a universal sofic approximation which is $\So$-approximable. Hence any action is $\So$-approximable.
\end{proof}
We note that our definition of $\So$-approximable action is rather strong and does not allow for taking subsequences to approximate.

We will say that an action $T$ of group $G$ on $(X,\mu)$ is $\So$-{\em strongly sofic} if for any observable $f: X \to A$ there is a sequence $(\eta_i)$, $\eta_i \in \Me(A^{V_i})$ such that 
\[
\lim_{i \to \infty}\Theta(\eta_i \otimes \eta_i) = \distr(f) \otimes \distr(f)
\]
and for any $\varepsilon>0$ the sets 
\[
W_{i,\varepsilon} = \lbrace \xi \in (A \times A)^V_i  \quad\vert\quad l(\Theta(\delta_{\xi}, \distr(f) \otimes \distr(f)) <\varepsilon\rbrace
\]
satisfy 
\[
\lim_{i \to \infty} \eta_i \otimes \eta_i(W_{i,\varepsilon}) = 1.
\]

This kind of convergence condition is called {\em doubly-quenched convergence} and it was introduced by Austin in \cite{Au16}. We will need the following theorem from his work. For action $T$ and a natural $n$ we will denote $T^n$ the product-action of the same group.  
\begin{theor}[Austin, \cite{Au16}]
Let $\So$ be a sofic approximation and $T$ be an action. If for any  natural $n$ we have that $T^n$ is $\So$-approximable then $T$ is strongly sofic with respect to $\So$.
\end{theor}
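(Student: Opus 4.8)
The plan is to manufacture the measures $\eta_i$ directly from good finite models of the powers $T^n$, exploiting the fact that inside such a model the distinct coordinate labelings already behave like an independent pair. Fix the observable $f:X\to A$. For each $n$, apply $\So$-approximability of $T^n$ to the observable $f^{\otimes n}:X^n\to A^n$, whose distribution is $\distr(f)^{\otimes n}$; this yields labelings $\rho_i^{(n)}\in(A^n)^{V_i}=(A^{V_i})^n$ with $\lim_i\Theta(\delta_{\rho_i^{(n)}})=\distr(f)^{\otimes n}$. Writing $\rho_i^{(n)}=(\tau_{i,1}^{(n)},\dots,\tau_{i,n}^{(n)})$ for the coordinate labelings, I would diagonalize: since for each fixed $n$ the distance $l(\Theta(\delta_{\rho_i^{(n)}}),\distr(f)^{\otimes n})$ tends to $0$ as $i\to\infty$, one may choose $n(i)\to\infty$ slowly enough that this distance still tends to $0$ along $n=n(i)$, and set $\eta_i=\frac1{n(i)}\sum_{k=1}^{n(i)}\delta_{\tau_{i,k}^{(n(i))}}\in\Me(A^{V_i})$. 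Then a draw from $\eta_i\otimes\eta_i$ is an independent uniform pair $(\tau_{i,k},\tau_{i,l})$ of coordinate labelings.

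The heart of the argument is that for every pair of distinct coordinates $k\neq l$ the two-coordinate model is at least as good as the full $n$-model:
\[
l\big(\Theta(\delta_{(\tau_{i,k},\tau_{i,l})}),\,\distr(f)\otimes\distr(f)\big)\ \le\ l\big(\Theta(\delta_{\rho_i^{(n)}}),\,\distr(f)^{\otimes n}\big).
\]
To see this I would lift test functions: given any $1$-Lipschitz $\Phi$ on $(A\times A)^G$, the function on $(A^n)^G$ sending $t$ to $\Phi(t^{(k)},t^{(l)})$ (reading off coordinates $k$ and $l$) is again $1$-Lipschitz, because the metric on $A^n$ is the supremum of the discrete metrics over the $n$ coordinates and the metric on $(A^n)^G$ is the weighted supremum over $G$, so the value $\Phi(t^{(k)},t^{(l)})$ is controlled by the ambient distance. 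Crucially this Lipschitz bound is uniform in $n$ and in the chosen pair $(k,l)$. Pairing the lifted function against $\Theta(\delta_{\rho_i^{(n)}})$ reproduces $\langle\Phi,\Theta(\delta_{(\tau_{i,k},\tau_{i,l})})\rangle$, while pairing it against $\distr(f)^{\otimes n}$ gives $\langle\Phi,\distr(f)\otimes\distr(f)\rangle$ since distinct coordinates of a product are independent; taking the supremum over $\Phi$ yields the displayed inequality.

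Both required conditions then follow. For the first-moment condition, $\Theta(\eta_i\otimes\eta_i)=\frac1{n(i)^2}\sum_{k,l}\Theta(\delta_{(\tau_{i,k},\tau_{i,l})})$, and by convexity of the Kantorovich distance its distance to $\distr(f)\otimes\distr(f)$ is at most the average of the pairwise distances; the off-diagonal terms are each bounded by $l(\Theta(\delta_{\rho_i^{(n(i))}}),\distr(f)^{\otimes n(i)})\to0$, while the $n(i)$ diagonal terms contribute at most $\mathrm{const}/n(i)\to0$, so $\Theta(\eta_i\otimes\eta_i)\to\distr(f)\otimes\distr(f)$. For the concentration condition, note that $\eta_i\otimes\eta_i(W_{i,\varepsilon})$ is exactly the fraction of pairs $(k,l)$ with $l(\Theta(\delta_{(\tau_{i,k},\tau_{i,l})}),\distr(f)\otimes\distr(f))<\varepsilon$; by the displayed inequality every off-diagonal pair satisfies this as soon as $l(\Theta(\delta_{\rho_i^{(n(i))}}),\distr(f)^{\otimes n(i)})<\varepsilon$, so $\eta_i\otimes\eta_i(W_{i,\varepsilon})\ge 1-1/n(i)\to1$. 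A routine strengthening of the diagonalization makes a single sequence $n(i)$ serve all $\varepsilon=1/m$ at once.

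I expect the step to guard most carefully to be the uniform coordinate-restriction estimate of the second paragraph: that passing to the marginal on two coordinates does not increase the Kantorovich distance to the corresponding product, uniformly over which two coordinates are chosen and uniformly in $n$. This is what lets $n(i)\to\infty$ — forced on us in order to render the diagonal pairs negligible — without any degradation of the off-diagonal pairs, and it is precisely here that the metric conventions (the discrete metric on the finite alphabet and the supremum over coordinates) are used. It is also the step carrying the genuine analytic weight in Austin's more general setting; in the present formulation it collapses the doubly-quenched condition to a simple count of diagonal against off-diagonal pairs.
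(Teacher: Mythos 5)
Your proof is correct, but it takes a genuinely different route from the paper, for the simple reason that the paper offers no proof at all: this theorem is imported as a black box from Austin \cite{Au16}, where it is established in his more general framework of measures on model spaces and local weak* convergence. Your argument is a self-contained, elementary proof tailored to this paper's simplified definitions, and it checks out: applying $\So$-approximability of $T^{n}$ to the observable $f^{\otimes n}$ gives labelings $\rho_i^{(n)}\in (A^n)^{V_i}$ whose empirical distributions converge to $\distr(f)^{\otimes n}$; a diagonal choice $n(i)\to\infty$ is legitimate precisely because the paper's notion of $\So$-approximability forbids passing to subsequences, so for each fixed $n$ the models exist along the full sequence of indices $i$. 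The key inequality is sound: the coordinate projection $(A^n)^G\to(A\times A)^G$ is distance non-increasing for the paper's metric conventions (discrete metric on finite alphabets, weighted supremum over $G$), uniformly in $n$ and in the pair $k\neq l$; it pushes $\Theta(\delta_{\rho_i^{(n)}})$ forward to $\Theta(\delta_{(\tau_{i,k},\tau_{i,l})})$ and pushes $\distr(f)^{\otimes n}$ to $\distr(f)\otimes\distr(f)$, so the off-diagonal pairs inherit the quality of the full model, while the diagonal pairs carry total mass $1/n(i)\to 0$. Two minor remarks. First, you did not need Kantorovich--Rubinstein duality: the paper states explicitly that distance non-increasing maps between compacta induce distance non-increasing maps on the spaces of measures, which gives your inequality directly by pushing forward couplings. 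Second, your closing concern about a ``routine strengthening'' to handle all $\varepsilon$ simultaneously is unnecessary: your single sequence $\eta_i$ already works for every $\varepsilon$, since $\eta_i\otimes\eta_i(W_{i,\varepsilon})\ge 1-1/n(i)$ holds as soon as $l(\Theta(\delta_{\rho_i^{(n(i))}}),\distr(f)^{\otimes n(i)})<\varepsilon$, and this happens for all large $i$. What your approach buys is a short, quantitative, fully explicit proof inside the paper's formalism; what the citation buys the paper is the statement in Austin's original and more robust setting, which is what Hayes's theorems (used later in the paper) are actually formulated in.
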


\begin{theor}\label{theor: all sofic are strongly sofic}
Let $\So$ be a maximal sofic approximation for a sofic group $G$. Then any sofic action of $G$ is strongly sofic with respect to $\So$.
\end{theor}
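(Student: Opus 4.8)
The plan is to reduce the statement directly to the theorem of Austin quoted above, whose hypothesis is precisely that every power $T^n$ be $\So$-approximable. So the whole argument amounts to verifying this hypothesis, and nearly all of the work has already been carried out in the preceding results.

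First I would fix an arbitrary sofic action $T$ of $G$ and an arbitrary natural number $n$. The action $T^n$ is the $n$-fold direct product of $T$ with itself, hence a finite product of sofic actions. By Theorem \ref{theor: product is sofic} the direct product of two sofic actions is sofic, and iterating this (or, equivalently, invoking Lemma \ref{lem: countable products} applied to finitely many factors) shows that $T^n$ is itself a sofic action of $G$.

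Next, since $\So$ is assumed to be a maximal sofic approximation, Theorem \ref{theor: maximal sofic approximation} guarantees that every sofic action of $G$ is $\So$-approximable; in particular $T^n$ is $\So$-approximable. As $n$ was arbitrary, the hypothesis of Austin's theorem is satisfied: for every natural $n$, the power $T^n$ is $\So$-approximable. I would then invoke Austin's theorem to conclude that $T$ is strongly sofic with respect to $\So$, and since $T$ was an arbitrary sofic action this proves the claim.

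I do not anticipate a serious obstacle, since the substantive content has been fully packaged into the product-is-sofic theorem, the maximality of $\So$, and Austin's result. The single point requiring a moment of care is to confirm that the ``product-action $T^n$'' appearing in Austin's theorem coincides with the finite direct power handled by Theorem \ref{theor: product is sofic}, so that its soficity, and thus its $\So$-approximability, is genuinely available.
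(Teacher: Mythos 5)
Your proof is correct and follows exactly the same route as the paper's: soficity of $T^n$ via Theorem \ref{theor: product is sofic}, then $\So$-approximability of each $T^n$ from the maximality of $\So$ (Theorem \ref{theor: maximal sofic approximation}), and finally Austin's theorem. Your write-up is in fact slightly more careful, since it makes explicit the appeal to Theorem \ref{theor: maximal sofic approximation}, which the paper's one-line proof only uses implicitly.
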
 

\begin{proof}
We note that if action $T$ is sofic then $T^n$ is sofic for any natural $n$ by the theorem \ref{theor: product is sofic}. It implies that $T^n$ is $\So$-approximable for any natural $n$. Hence it is $\So$-stronly sofic by the theorem of Austin.
\end{proof}

%Let $\So$ be a maximal sofic approxim
%Let us note that any sofic action $T$ is 

\section{Class $\St$}\label{sec: class s}

The class $\St$ was defined in by Paunescu in \cite{Pa11} to be the class of all the sofic groups such that all the m.p.t. actions of these groups are sofic. He also discovered some properties of this class. 

Let $T$ be an m.p.t. action of group $G$ on the standard probability space $(X,\mu)$. Consider correspondent orbit equivalence relation $\sim$. Consider any collection $(\psi_i)$ of partial Borel maps (i.e. defined on a Borel subset) from $X$ to itself such that their graphs are subsets of an equivalence relation $\sim$. This sequence will be called {\em graphing} if induced graph is such that for almos every $x \in X$ and every $y \sim x$ we have that $x$ and $y$ are connected via the graph induced by the collection $(\psi_i)$. This graphing is called {\em treeing} if this graph is acyclic. An action relation is called {\em treeable} if its equivalence relations admits a treeing. 
A countable group group is called {\em treeable} if it admits an essentially free treeable action.

\begin{theor}
Class of treeable groups is subset of $\St$.
\end{theor}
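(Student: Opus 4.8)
The plan is to reduce an arbitrary action of a treeable group to a \emph{treeable} one by taking a product with a fixed free treeing, and then to pass back via weak containment. Fix a treeable group $G$, and let $S$ be an essentially free action of $G$ on $(Y,\nu)$ whose orbit equivalence relation $R_S$ admits a treeing $(\phi_j)$. Since a treeable action is sofic (by \cite{EL10}, \cite{Pa11}) and a sofic action of $G$ presupposes a sofic approximation of $G$, the group $G$ is automatically sofic; so it remains only to show that an arbitrary m.p.t.\ action $T$ of $G$ on $(X,\mu)$ is sofic.

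First I would form the product action $T \times S$ on $(X \times Y, \mu \otimes \nu)$ and argue that its orbit equivalence relation $R_{T\times S}$ is treeable. The key point is that, because $S$ is essentially free, the coordinate projection $p : X \times Y \to Y$ restricts to a bijection from each $(T\times S)$-orbit onto the corresponding $S$-orbit: two orbit elements $(gx,gy)$ and $(g'x,g'y)$ with the same image satisfy $gy=g'y$, whence $g=g'$ by freeness. Freeness also makes the map sending $(y,y') \in R_S$ to the unique $g$ with $gy=y'$ Borel, so I can lift each $\phi_j$ to a partial Borel map $\tilde\phi_j$ on $X \times Y$ by setting $\tilde\phi_j(x,y) = (gx, gy)$, where $g$ is the unique group element with $gy=\phi_j(y)$, for $y \in \dom\phi_j$. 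Each $\tilde\phi_j$ has graph inside $R_{T\times S}$, and I would verify that $(\tilde\phi_j)$ is again a treeing: connectivity of $(T\times S)$-classes follows by lifting, edge by edge, any $(\phi_j)$-path between the $Y$-coordinates (each lift being forced by the injectivity of $p$ on orbits), and acyclicity follows because $p$ carries the induced graph upstairs onto the induced graph downstairs while remaining injective on each orbit, so a reduced cycle upstairs would project to a reduced cycle in the tree $(\phi_j)$. Hence $R_{T\times S}$ is treeable, and therefore $T \times S$ is a sofic action.

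Finally, $T$ is a factor of $T \times S$ through the coordinate projection $X \times Y \to X$, which is $G$-equivariant and measure-preserving; composing any observable $f : X \to A$ with this projection yields an observable on $X \times Y$ with exactly the same distribution, so $\distr_T(f)$ is realized on $X \times Y$ and $T$ is weakly contained in $T \times S$. Invoking the folklore fact that an action weakly contained in a sofic action is itself sofic, I conclude that $T$ is sofic. As $T$ was arbitrary and $G$ is sofic, every m.p.t.\ action of $G$ is sofic, i.e.\ $G \in \St$.

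I expect the main obstacle to be the verification that the lifted graphing $(\tilde\phi_j)$ is genuinely a treeing --- in particular the measurable selection of the group elements realizing $\phi_j$ along orbits (which is precisely where essential freeness of $S$ is used) together with the acyclicity check via the orbit-injectivity of $p$. Everything after that, namely the factor and weak-containment step, is standard.
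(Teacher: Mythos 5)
Your proof is correct and takes essentially the same route as the paper: form the product of an arbitrary action $T$ with an essentially free treeable action, observe that this product is essentially free and treeable (hence sofic by \cite{EL10}, \cite{Pa11}), and conclude that $T$ is sofic because it is a factor of (hence weakly contained in) a sofic action. The only difference is that you spell out the lifting-of-the-treeing argument for the product, which the paper asserts without proof.
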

\begin{proof}
Take any actions $T$. Take any essentially free treeable action $T'$. Their direct product will be essentially free and treeable. Hence it is sofic (see \cite{EL10} and \cite {Pa11}). So $T$ is a factor of sofic action, hence it is sofic.
\end{proof}

\begin{theor}
Class $\St$ is closed under taking subgroups.
\end{theor}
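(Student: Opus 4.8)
The plan is to verify the two requirements for $H \in \St$ separately: that $H$ is sofic, and that every measure preserving action of $H$ is sofic. Fix $H \leq G$ with $G \in \St$. The observation underlying everything is that a sofic approximation restricts to a subgroup: if $\So = ((V_i),(\sigma_i))$ is a sofic approximation of $G$, then $\So|_H := ((V_i),(\sigma_i|_H))$, with $\sigma_i|_H : H \to \Sym(V_i)$ the restriction, is a sofic approximation of $H$. Indeed both defining conditions quantify over finitely many group elements and are inherited: asymptotic freeness for $g_1 \neq g_2$ in $H$ and asymptotic multiplicativity for $g_1,g_2 \in H$ (note $g_1 g_2 \in H$) are instances of the conditions already known for $G$. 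Applying this to any sofic approximation of the sofic group $G$ shows immediately that $H$ is sofic.

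The key step is that soficity of an action passes to its restriction to $H$, using $\So|_H$. Suppose $G \curvearrowright (Y,\nu)$ is $\So$-weakly approximable and consider the same space with the restricted $H$-action. Fix an observable $f : Y \to A$, and let $\rho : A^G \to A^H$ be restriction of functions from $G$ to $H$. This $\rho$ is continuous and satisfies $\rho \circ f^G = f^H$, so $\rho_{\ast} \distr_G(f) = \distr_H(f)$ (here $\distr_G$, $\distr_H$ denote the distributions for the $G$- and $H$-actions). Directly from the definitions one also checks $\rho \circ \theta_{v,\sigma_i} = \theta_{v,\sigma_i|_H}$, and averaging over $v \in V_i$ gives $\rho_{\ast} \Theta_{\sigma_i}(\eta) = \Theta_{\sigma_i|_H}(\eta)$. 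Since pushforward by a continuous map is continuous for the Kantorovich topology, any sequence witnessing $\Theta_{\sigma_{i_j}}(\delta_{\tau_j}) \to \distr_G(f)$ maps under $\rho_{\ast}$ to a witness of $\Theta_{\sigma_{i_j}|_H}(\delta_{\tau_j}) \to \distr_H(f)$. Hence the restricted $H$-action is $\So|_H$-weakly approximable, i.e. sofic.

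It remains to realize an arbitrary $H$-action as such a restriction, up to a factor. Let $a$ be a measure preserving action of $H$ on a standard probability space $(X,\mu)$, and form the co-induced $G$-action $b = \mathrm{CInd}_H^G(a)$ on $(X^{G/H}, \mu^{\otimes G/H})$. Evaluation at the identity coset, $\phi \mapsto \phi(H)$, is an $H$-equivariant measure preserving map exhibiting $a$ as a factor of the restriction $b|_H$. Now $b$ is a $G$-action, so it is sofic because $G \in \St$; by the previous paragraph $b|_H$ is then a sofic $H$-action; and since a factor is weakly contained in the original action, the folklore fact that an action weakly contained in a sofic action is sofic shows that $a$ is sofic. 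Thus every action of $H$ is sofic and $H \in \St$.

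I expect the main obstacle to be the middle step: checking that soficity of a $G$-action descends to its $H$-restriction together with the restricted approximation $\So|_H$. This rests on the compatibility identity $\rho \circ \theta_{v,\sigma} = \theta_{v,\sigma|_H}$ and the continuity of pushforward; the co-induction construction and the weak-containment principle are standard inputs that I would simply cite.
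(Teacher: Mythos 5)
Your proof is correct and follows the same skeleton as the paper's: co-induce the $H$-action to $G$, invoke $G \in \St$ to get soficity of the co-induced action, restrict back to $H$, and recover the original action as a factor. The differences are in rigor, and they favor your version. The paper simply asserts that the restriction $\bar{T}$ of the co-induced action to $H$ ``will be sofic''; you actually prove this, via the observation that a sofic approximation $\So$ of $G$ restricts to a sofic approximation $\So|_H$ of $H$ and that the restriction map $\rho: A^G \to A^H$ intertwines $\theta_{v,\sigma_i}$ with $\theta_{v,\sigma_i|_H}$ and $\distr_G(f)$ with $\distr_H(f)$ --- this also handles, in passing, the (needed but unstated) fact that $H$ is itself sofic. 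Moreover, the paper claims the restricted co-induced action is isomorphic to the product of $\lvert G : H \rvert$ copies of $T$, which is inaccurate in general (for non-normal $H$ the $H$-action on $G/H$ has nontrivial orbit structure, and even for normal $H$ the off-identity coordinates carry conjugated copies of the action); your argument sidesteps this entirely by using only the evaluation map at the identity coset, which is an $H$-equivariant factor map regardless of the coset structure, and then the folklore fact (stated in the paper's introduction) that an action weakly contained in a sofic action is sofic. So your write-up is not a different route, but it is a tighter one.
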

\begin{proof}
Let $H$ be a subgroup of $G \in \St$. Let $T$ be an action of $H$. Consider a co-induced action $\tilde{T}$ of $G$. It is sofic. We note that induced action $\bar{T}$ of $H$ will be sofic and it is isomorphic to the product of $\lvert G : H\rvert$ copies of $T$. Hence $T$ is a factor of sofic action and hence it is sofic.
\end{proof}

\section{Applications}\label{sec: applications}

Let $X$ be a metrizable compact topological group and let $T$ be an action of countable group $G$ on $X$ by continuous automorphisms. Such an action is called an {\em algebraic action}. It could be endowed with the Haar measure, it is easy to see that this measure is presserved by this action. We refer the reader to \cite{H16a} for the definition of sofic topological and measure entropy.

\begin{cor}\label{cor: algebraic}
Let us fix a maximal sofic approximation. Then for any algebraic action we will have that  measure sofic entropy of this action endowed with the Haar measure is either $-\infty$ or equals to the topological entropy. If acting group belongs to $\St$ then we have equality for maximal sofic approximations without additional conditions.
\end{cor}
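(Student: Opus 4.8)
The plan is to reduce everything to the cited theorem of Hayes from \cite{H16a} by supplying its doubly-quenched hypothesis via Theorem \ref{theor: all sofic are strongly sofic}. The starting observation is the standard dichotomy built into sofic measure entropy: relative to a fixed sofic approximation $\So$, the measure entropy equals $-\infty$ precisely when the action admits no microstates over $\So$, that is, precisely when it fails to be $\So$-weakly approximable. Consequently, if the measure sofic entropy of our algebraic action (endowed with the Haar measure) is not $-\infty$, then microstates exist and the action is sofic with respect to $\So$, hence sofic.

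Granting that the action is sofic, I would then invoke Theorem \ref{theor: all sofic are strongly sofic}: since $\So$ is a \emph{maximal} sofic approximation and the action is sofic, it is strongly sofic (doubly-quenched) with respect to $\So$. This is the essential step, as it promotes mere approximability to the doubly-quenched convergence required by Hayes's argument, and it is exactly here that maximality of $\So$ and Austin's theorem are used. With the doubly-quenched hypothesis in hand, the theorem of Hayes from \cite{H16a} asserts that for an algebraic action that is strongly sofic with respect to $\So$, the measure-theoretic sofic entropy with respect to the Haar measure coincides with the topological sofic entropy. This yields the first assertion at once: the measure entropy is either $-\infty$, or it equals the topological entropy.

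For the second assertion, suppose $G \in \St$. Then every measure-preserving action of $G$ is sofic, so in particular our algebraic action is sofic; since $\So$ is maximal it is therefore $\So$-approximable by Theorem \ref{theor: maximal sofic approximation}. Hence microstates exist and the measure sofic entropy is not $-\infty$, so the degenerate branch of the dichotomy never occurs and the equality with topological entropy holds unconditionally. The only genuinely hard input is packaged inside \cite{H16a}: the identification of measure and topological entropy under doubly-quenched convergence is carried out there, and the contribution of the present argument is simply to guarantee, uniformly over all sofic algebraic actions, that the doubly-quenched hypothesis is met for a maximal sofic approximation.
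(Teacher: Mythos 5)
Your proposal is correct and follows essentially the same route as the paper: the dichotomy that non-$(-\infty)$ measure entropy forces the Haar-measure action to be sofic, then Theorem \ref{theor: all sofic are strongly sofic} to upgrade soficity to strong (doubly-quenched) soficity with respect to the maximal approximation, and finally Theorem 1.1 of \cite{H16a} to conclude the equality of measure and topological entropy, with the $\St$ case handled by noting the action is automatically sofic. Your additional remark that for $G \in \St$ the entropy branch $-\infty$ cannot occur (via $\So$-approximability from Theorem \ref{theor: maximal sofic approximation}) is a harmless elaboration of the same argument.
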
 
\begin{proof}
If measure entropy is not $-\infty$ then we have that algebraic action endowed with the Haar measure is sofic (we note that if acting group belongs to $\St$ then this is always the case).
Hence it is strongly sofic with respect to our approximation (theorem \ref{theor: all sofic are strongly sofic}). By the theorem 1.1 of \cite{H16a} its measure entropy equals to the topological entropy.
\end{proof}

We refer the reader to \cite{H16a} for the definitiojn of Pinsker factor for sofic entropy in the presence.

\begin{cor}\label{cor: pinsker}
Let $T_i$ act on standard probability space $(X_i,\mu_i)$ for $i=1,2$. Let $\So$ be a maximal sofic approximation and suppose that $T_i$ are sofic. Then Pinsker factors for sofic entropy in the presence for considered actions obey the prodict formula: Pinsker subalgebra for product action equals to the join of Pinsker algebras for $T_i$, $i=1,2$.
\end{cor}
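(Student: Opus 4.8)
The plan is to deduce the product formula for Pinsker factors from a corresponding theorem of Hayes together with the strong soficity furnished by Theorem \ref{theor: all sofic are strongly sofic}. The key observation is that the hypotheses we have assembled are precisely what Hayes requires: since $\So$ is a maximal sofic approximation and each $T_i$ is sofic, the product action $T_1 \times T_2$ is also sofic by Theorem \ref{theor: product is sofic}, and therefore \emph{every} action in sight---$T_1$, $T_2$, and $T_1 \times T_2$---is strongly sofic with respect to $\So$ by Theorem \ref{theor: all sofic are strongly sofic}. This is the crucial point, because the product formula for Pinsker factors in \cite{H16b} is stated under exactly a strong soficity (doubly-quenched) hypothesis on the relevant actions.

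First I would recall that the Pinsker factor for sofic entropy in the presence is defined relative to the fixed approximation $\So$, so that all three Pinsker subalgebras ($\sI_1$ for $T_1$, $\sI_2$ for $T_2$, and $\sI$ for the product) are well defined once strong soficity guarantees the entropy is not identically $-\infty$ in the degenerate sense. Next I would invoke the appropriate theorem of Hayes from \cite{H16b}, which asserts that for actions that are strongly sofic with respect to $\So$, the Pinsker subalgebra of a product action coincides with the join of the Pinsker subalgebras of the factors. The role of our work is simply to \emph{verify the hypothesis}: strong soficity of all three actions with respect to the single fixed approximation $\So$. Without a maximal sofic approximation one would have no guarantee that a common $\So$ strongly approximates all three actions simultaneously, which is exactly the obstacle that maximality removes.

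The one inclusion that is formal is $\sI_1 \vee \sI_2 \subseteq \sI$: a factor with zero entropy in the presence pulls back to a factor of the product that still has zero entropy in the presence, essentially because adding an independent or dependent complementary coordinate cannot create entropy in a completely positive entropy sense. The substantive inclusion $\sI \subseteq \sI_1 \vee \sI_2$ is the content of Hayes' theorem and is where strong soficity is genuinely used; I would not reprove it here but cite it directly.

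The main obstacle I anticipate is ensuring that the notion of strong soficity in our Theorem \ref{theor: all sofic are strongly sofic} matches, on the nose, the hypothesis under which Hayes proves his product formula---in particular that his doubly-quenched convergence requirement for the product action is implied by (rather than merely analogous to) the doubly-quenched condition we have established. Since our strong soficity is obtained for each $T_i$ and for $T_1 \times T_2$ with the \emph{same} $\So$, and Austin's theorem as quoted gives strong soficity from $\So$-approximability of all powers, this matching should go through, but it is the step that requires care in citing \cite{H16b} precisely.

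\begin{proof}
By Theorem \ref{theor: product is sofic} the product action $T_1 \times T_2$ is sofic, and so by Theorem \ref{theor: all sofic are strongly sofic} each of the actions $T_1$, $T_2$ and $T_1 \times T_2$ is strongly sofic with respect to the maximal sofic approximation $\So$. The product formula now follows from the corresponding theorem of Hayes in \cite{H16b}, whose hypothesis is precisely strong soficity of these actions with respect to a common sofic approximation.
\end{proof}
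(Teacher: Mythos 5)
Your proposal is correct and follows essentially the same route as the paper: invoke Theorem \ref{theor: all sofic are strongly sofic} to get strong soficity with respect to the maximal sofic approximation $\So$, then cite Theorem 1.1 of \cite{H16b}. Your version is in fact slightly more careful than the paper's, since you explicitly note (via Theorem \ref{theor: product is sofic}) that the product action is itself sofic and hence strongly sofic with respect to the same $\So$, a point the paper's one-line proof leaves implicit.
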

\begin{proof}
Our actions are strongly sofic with respect to $\So$ by \ref{theor: all sofic are strongly sofic} and the statement follows by the theorem 1.1 from \cite{H16b}.
\end{proof}

\end{document}